\newtheorem{theorem}{Theorem}[section]
\newtheorem{lemma}[theorem]{Lemma}
\newtheorem{proposition}[theorem]{Proposition}
\newtheorem{example}[theorem]{Example}
\newtheorem{corollary}[theorem]{Corollary}
\newtheorem{remark}[theorem]{Remark}
\newcommand{\Z}{{\mathbb Z}}
\newcommand{\Q}{{\mathbb Q}}
\newcommand{\C}{{\mathbb C}}
\newcommand{\PSL}{{\rm PSL}}
\newcommand{\ind}{{\rm ind}}
\newcommand{\matriz}[1]{\begin{array} #1 \end{array}}
\newcommand{\GEN}[1]{\langle #1 \rangle}
\newcommand{\V}{\mathrm{V}}
\newenvironment{proofof}{\par\noindent \textit{Proof of }}{\qed\par\bigskip}
\newcommand{\pa}[2]{\varepsilon_{#2}(#1)}
\DeclareMathOperator{\Cl}{Cl}
\DeclareMathOperator{\Irr}{Irr}
\DeclareMathOperator{\IBr}{IBr}
\DeclareMathOperator{\IRR}{IRR}
\DeclareMathOperator{\VPA}{VPA}
\DeclareMathOperator{\PAP}{PAP}
\begin{document}
\title[Partial Augmentations Power property]{Partial Augmentations Power property: \\
A Zassenhaus Conjecture related problem}

\author{Leo Margolis and Ángel del Río}
\thanks{This research is partially supported by the European Commission under H2020 Grant 705112-ZC, by the Spanish Government under Grant MTM2016-77445-P with "Fondos FEDER" and, by Fundación Séneca of Murcia under Grant 19880/GERM/15.}

\keywords{Integral group ring, groups of units, Zassenhaus Conjecture, partial augmentation}

\subjclass{16U60, 16S34, 20C05, 20C10}

\begin{abstract}
Zassenhaus conjectured that any unit of finite order in the integral group ring $\mathbb{Z}G$ of a finite group $G$ is conjugate in the rational group algebra of $G$ to an element in $\pm G$. We review the known weaker versions of this conjecture and introduce a new condition, on the partial augmentations of the powers of a unit of finite order in $\mathbb{Z}G$, which is weaker than the Zassenhaus Conjecture but stronger than its other weaker versions.

We prove that this condition is satisfied for units mapping to the identity modulo a nilpotent normal subgroup of $G$. Moreover, we show that if the condition holds then the HeLP Method adopts a more friendly form and use this to prove the Zassenhaus Conjecture for a special class of groups.
\end{abstract}

\maketitle

\section{Open problems on torsion units of integral group rings}

The structure of the group of units $\mathrm{U}(\mathbb{Z}G)$ of an integral group ring $\mathbb{Z}G$ of a group $G$ has given rise to many interesting results and even more questions.
In particular for finite $G$, the units of finite order in $\mathrm{U}(\mathbb{Z}G)$ were already studied by G.~Higman, who showed that for abelian $G$ all are of the form $\pm g$ for some $g \in G$ \cite{Higman1940}. Note that, denoting by $\mathrm{V}(\mathbb{Z}G)$ the elements of $\mathrm{U}(\mathbb{Z}G)$ whose coefficients sum up to $1$, we have $\mathrm{U}(\mathbb{Z}G) = \pm \mathrm{V}(\mathbb{Z}G)$. So to study the unit group of $\mathbb{Z}G$ it is sufficient to study $\mathrm{V}(\mathbb{Z}G)$.

For non-abelian $G$ already the smallest examples show that a result in the spirit of Higman is not valid and the strongest result one could hope for was put forward by H.~J.~Zassenhaus as a conjecture.

\begin{quote}
\textbf{Zassenhaus Conjecture (ZC):} \cite{Zassenhaus} Let $G$ be a finite group and $u$ a unit
in $\mathrm{V}(\mathbb{Z}G)$ of finite order. Then there exists a unit $x$ in the rational group algebra of $G$ and an element $g \in G$ such that $x^{-1}ux =
g$.
\end{quote}

If for $u \in \mathrm{V}(\mathbb{Z}G)$ such $x$ and $g$ exist we say that $u$ and $g$ are rationally conjugate.
The Zassenhaus Conjecture has been proven for some classes of solvable groups, such as nilpotent groups \cite{Weiss1991}, cyclic-by-abelian groups \cite{CaicedoMargolisdelRio2013}, or groups which possess a normal Sylow subgroup with abelian complement \cite{Hertweck2006}. It has also been proven for some particular non-solvable groups \cite{LutharPassi1989, HertweckA6, 4primaryII, FermatMersenne}.

Recently a metabelian counterexample to the Zassenhaus Conjecture has been obtained by Eisele and Margolis \cite{EiseleMargolis}.
It still seems almost hopeless to classify the groups for which the Zassenhaus Conjecture holds in wide classes of groups as e.g. solvable groups, or even metabelian groups.
Therefore also weaker versions of this conjecture are studied.
For example Kimmerle proposed to study the following problem \cite{Ari} which has been studied in \cite{KimmerleMargolis17}.

\begin{quote}
\textbf{Kimmerle Problem, (KP)}
Given a finite group $G$ and an element $u$ of finite order in $\V(\Z G)$, is there a finite group $H$ containing $G$ as subgroup such that $u$ is conjugate in $\Q H$ to an element of $G$?
\end{quote}

Recall that the \emph{spectrum} of a group $G$ is the set of orders of its torsion elements.
The \textit{prime graph} of $G$ has vertices labelled by the primes appearing as orders of torsion elements of $G$ and two vertices $p$ and $q$ are connected if and only if $G$ possesses an element of order $pq$.
Clearly, if the Zassenhaus Conjecture has a positive solution for a group $G$ then $G$ and $\V(\Z G)$ have the same spectrum and hence the same prime graph, too. This
yields the following problems.
\begin{quote}
\textbf{The Spectrum Problem (SP):} (see e.g. \cite[Problem 8]{Sehgal1993}) Given a finite group $G$, do $G$ and $\V(\Z G)$ have the same spectrum?
\end{quote}
\begin{quote}
\textbf{The Prime Graph Question (PQ):} (introduced in \cite{Kimmerle2006}) Given a finite group $G$, do $G$ and $\V(\Z G)$ have the same prime graph?
\end{quote}

The Spectrum Problem and the Prime Graph Question have been answered for much wider classes than the Zassenhaus Conjecture. The Spectrum Problem has a positive
answer for solvable groups \cite{HertweckOrders} and Frobenius groups \cite[Corollary 2.5]{KimmerleKonovalov2016}. The Prime Graph Question also holds for many sporadic simple groups (see e.g. \cite{BovdiKonovalovConway}) or
some infinite series of almost simple groups \cite{4primaryI}. Moreover in contrast to the Zassenhaus Conjecture and the Spectrum Problem, for the Prime Graph
Question a reduction theorem, to almost simple groups, is known \cite[Theorem~2.1]{KimmerleKonovalov2016}. The role of the prime graph for the structure of
integral group rings is described in detail in \cite{Kimmerle2006}. For an overview of the known results see also \cite{BaechleKimmerleMargolisDFG}.
Without putting assumptions on $G$, the most far reaching result regarding the Spectrum Problem is a theorem by Cohn-Livingstone which states that the exponents of $G$ and $\V(\Z G)$ coincide \cite{CohnLivingstone}.

In fact Cohn and Livingstone showed that certain sums of coefficients of torsion elements in $\V(\Z G)$ satisfy some congruences \cite[Proposition~4.1]{CohnLivingstone}.
Before we state these congruences denote by $\pa{a}{X}$ the sum of the coefficients of $a$ at $X$, i.e.
    $$\pa{a}{X} = \sum_{x \in X} a_x$$
for $a=\sum_{x\in G} a_x x\in \Z G$, with $a_x\in \Z$ for each $x$, and $X$ a subset of $G$.
If $X$ is a conjugacy class in $G$ then $\pa{a}{X}$ is called the
\textit{partial augmentation} of $a$ at $X$. The importance of partial augmentations in the study of the Zassenhaus Conjecture and related questions was first realized by Marciniak, Ritter, Sehgal and Weiss who proved the following: A torsion element $u \in \V(\Z G)$ is rationally conjugate to an element of $G$ if and
only if $\pa{v}{C} \geq 0$ for every $v\in \GEN{u}$ and any conjugacy class $C$ of $G$ \cite[Theorem 2.5]{MarciniakRitterSehgalWeiss1987}.

For a positive integer $n$ denote by $G[n]$ the set of elements of $G$ of order $n$.
Moreover let $\Cl(G)$ denote the set of all conjugacy classes in $G$ and for $C, D \in \Cl(G)$ and $k$ an integer write $C^k = D$ if $x^k \in D$ for $x \in C$.
The Cohn-Livingstone congruences states that if $u$ is a unit of order $p^m$, with $p$ prime, then for every integer $k$ we have
    \begin{equation}\label{CohnLivingstoneCongruence}
    \pa{u}{G[p^k]} \equiv \begin{cases}
    1 \mod p, & \text{if } k=m; \\ 0 \mod p, & \text{otherwise}.
    \end{cases}
    \end{equation}

This has inspired A. Bovdi to pose the following question.

\begin{quote}
\textbf{Bovdi Problem (BP):} \cite[p. 26]{BovdiBook} (cf. also \cite[Problem~1.5]{ArtamonovBovdi1989})
Let $u$ be a torsion element of $\V(\Z G)$ of prime power order $p^m$.
Is $\pa{u}{G[p^k]}=0$ for every $k\ne m$ and $\pa{u}{G[p^m]}=1$?
\end{quote}

This question was studied for special classes of groups e.g. in \cite{Juriaans1995} or \cite[Proposition 6.5]{HertweckBrauer}. Clearly as an intermediate step between the Zassenhaus Conjecture and the Spectrum Problem one could pose the following generalization of the question of Bovdi.

\begin{quote}
\textbf{General Bovdi Problem (Gen-BP):}
Let $u$ be a torsion element of $\V(\Z G)$ of order $n$.
Is $\pa{u}{G[m]} = 0$ for every $m\ne n$?
\end{quote}

This is also stated as Problem 44 in \cite{Sehgal1993}. It was studied for certain infinite groups in \cite{Dokuchaev1992} and proved for metabelian non-necessarily finite groups in \cite[Corollary 1.4]{DokuchaevSehgal1994}.
Let $u \in \V(\Z G)$ be a torsion unit of order $n$ and let $G$ be finite.
More evidence on (Gen-BP) is that it holds for $m=1$, by the Berman-Higman Theorem, and if $m$ does not divide $n$, by 
a theorem of Hertweck which states that $\pa{u}{C} = 0$ if the order of the elements in $C$ does not divide $n$ 
\cite[Theorem~2.3]{HertweckBrauer}.
We will use these facts in the following without further mention.

A folklore congruence for $a\in \Z G$, a prime $p$, an integer $k$ and $C \in \Cl(G)$ is the following:
    \begin{equation}\label{FolkoreCongruence}
    \sum\limits_{\substack{D \in \Cl(G), \\ D^{p^k} = C}} \pa{a}{D} \equiv \pa{a^{p^k}}{C} \mod p.
    \end{equation}
(See \cite[Remark 6]{BovdiHertweck2008} for some references and \cite[Proposition 3.1]{HeLPPaper} for a proof for the case where $a\in \V(\Z G)$ which actually works in general.)
This suggests to introduce the following property, in the spirit of (BP) and (Gen-BP).

\begin{quote}
\textbf{The Partial Augmentation Power Property (PAP)}:
We say that an element $u$ of $\V(\Z G)$ satisfies the \emph{Partial Augmentation Power Property} (PAP Property for short) if the following holds for every integer $k$:
$$
\PAP(k) : \hspace{.5cm} \pa{u^k}{C} = \sum_{\substack{D\in \Cl(G), \\ D^k=C}} \pa{u}{D} \text{ for every } C\in \Cl(G).
$$
We say that $G$ satisfies the \emph{PAP Property} if any torsion element of $\V(\Z G)$ satisfies the PAP Property.
\end{quote}

Note that each element of $G$ satisfies the PAP Property. Moreover, the partial augmentations of rationally conjugate units coincide. Thus the PAP Property holds for every unit rationally conjugate to an element of $G$ and, in particular, the Zassenhaus Conjecture implies the PAP Property.

In Section~\ref{SectionConnections} we summarize the logical connections between the problems mentioned above.
In Section~\ref{SectionPAP} we prove the PAP Property for a particularly interesting class of units and give
some practical features of the PAP Property.
In Section~\ref{SectionPAPHeLP} we analyze the implications and connections between the PAP Property and the HeLP Method.
We show how these can be applied to the Zassenhaus Conjecture by proving the following theorem.

\begin{theorem}\label{TheoremZC}
Let $G$ be a finite group with an abelian subgroup $A$ of prime index in $G$. Assume that $A$ is generated by at most two elements.
Then the Zassenhaus Conjecture holds for $G$.
\end{theorem}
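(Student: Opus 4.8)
The plan is to verify the criterion of Marciniak, Ritter, Sehgal and Weiss: it is enough to prove that every torsion unit $u\in\V(\Z G)$ satisfies $\pa{v}{C}\ge 0$ for all $v\in\GEN{u}$ and all $C\in\Cl(G)$. I would first fix the structure of $G$. We may assume $A\trianglelefteq G$; indeed, since $[G:A]=p$ is prime, $A$ is a maximal subgroup, and if it were not normal then $G/\core_G(A)$ would be a primitive permutation group of prime degree with abelian point stabilizer, hence of affine type and so cyclic-by-abelian, a case already settled in \cite{CaicedoMargolisdelRio2013}. With $A$ abelian and normal of index $p$, Clifford theory shows that every $\chi\in\Irr(G)$ has degree $1$ or $p$, the degree-$p$ ones being monomial, induced from a linear character $\lambda$ of $A$. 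Finally the two-generator hypothesis gives $A\cong C_{m_1}\times C_{m_2}$, which for each $d$ bounds the number of $G$-conjugacy classes of elements of order $d$ lying in $A$, and hence the number of partial augmentations that can be nonzero.

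Next I would bring in the PAP Property. Writing $n$ for the order of $u$, its image in $\V(\Z(G/A))=\V(\Z C_p)$ is a trivial unit, so $u^p$ maps to the identity modulo the abelian, in particular nilpotent, normal subgroup $A$. By the result of Section~\ref{SectionPAP}, such units satisfy the PAP Property, and I would combine this with the friendly form of the HeLP Method from Section~\ref{SectionPAPHeLP} to rewrite every constraint $\chi(u^k)=\sum_{C}\pa{u^k}{C}\,\chi(c_C)$ purely in terms of the unknowns $\pa{u}{D}$, where $c_C$ denotes a representative of $C$. This turns the HeLP data into a single linear system in the vector $(\pa{u}{D})_D$.

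The core of the argument is then a case analysis on whether $p\mid n$. If $p\nmid n$, then every element whose order divides $n$ has order prime to $p$ and therefore lies in $A$, since any $x\notin A$ satisfies $p\mid\mathrm{ord}(x)$; by Hertweck's vanishing theorem all nonzero partial augmentations of $u$ sit on classes inside $A$, and the linear characters of $G$ together with the linear characters of $A$ suffice to pin them down. If $p\mid n$, the support can leave $A$, and here I would exploit the degree-$p$ monomial characters $\lambda^{G}$: computing their values on the powers of $u$ and imposing non-negativity of the eigenvalue multiplicities of $u$ under each such representation yields, after the PAP substitution, a determined system forcing $\pa{u}{D}\ge 0$.

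The step I expect to be the main obstacle is closing this HeLP system, that is, showing that the multiplicity and non-negativity constraints genuinely force each $\pa{u}{D}$ to be non-negative rather than merely restricting them. The two-generator hypothesis on $A$ is precisely what keeps the number of unknown partial augmentations small enough, relative to the available monomial characters, for the system to be solvable; I expect the delicate points to be the verification that the induced characters $\lambda^{G}$ separate the relevant conjugacy classes and the bookkeeping of the root-of-unity multiplicities across the various powers $u^k$.
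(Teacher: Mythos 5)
There is a genuine gap, and it sits exactly at the heart of the argument. Your proposal correctly assembles the ambient machinery (PAP Property plus the PAP-adapted HeLP Method, monomial characters induced from linear characters of $A$), but the step you yourself flag as ``the main obstacle'' --- that the non-negativity constraints \emph{force} $\pa{u}{D}\ge 0$ --- is precisely the content of the paper's proof, and it cannot be obtained by the counting heuristic you propose. The paper does not close a linear system at all: after reducing to $u\in\V(\Z G,A)$ via \cite[Lemma~5.9]{MargolisdelRioCW1}, it invokes Hertweck's results \cite[Theorem~2.3]{MargolisdelRioCW1} that every class in the support of $u$ lies in $A$ and, crucially, that the $q$-parts of any two support elements are conjugate in $G$; then, for each fixed class $a^G$ with $\pa{u}{a^G}\ne 0$, it constructs \emph{one} linear character $\rho$ of $A$ whose kernel $K=\prod_q K_q$ is built from decompositions $A_q=\GEN{b_q}\times\GEN{c_q}$ with $a_q\in\GEN{b_q}$, taking $K_q=\GEN{c_q}$ or $\GEN{b_q}$ according to whether $x$ normalizes $\GEN{a_q}$. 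The point of this choice is a separation lemma: if $d\in A$ has all $q$-parts conjugate to those of $a$ and $\rho(d)=\rho(a)$, then $d=a$. Hence $\xi=\rho(a)$, as an eigenvalue of $\ind_A^G\rho$, detects only the class $a^G$ among the support, and Proposition~\ref{MultiplicitiesLinear} collapses to $\mu(\chi,u,\xi)=\pa{u}{a^G}\,\mu(\chi,a,\xi)\ge 0$ with $\mu(\chi,a,\xi)>0$, giving $\pa{u}{a^G}\ge 0$ class by class; Proposition~\ref{ZassenhausVersusPAP} then finishes. Note that the two-generator hypothesis is used not to bound the number of unknowns but to split each $A_q$ so that $a_q$ lies in a direct factor, which is what makes the separating character exist; and without Hertweck's local conjugacy of $q$-parts --- which you never invoke --- no single character can isolate a class. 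That some extra idea beyond HeLP bookkeeping is unavoidable is underlined by the paper's own examples where HeLP fails to prove PAP, and by the metabelian counterexample to (ZC) \cite{EiseleMargolis}: ``few unknowns, many monomial characters'' is not a mechanism that can be expected to terminate.

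Two auxiliary steps are also defective. First, PAP: from the image of $u$ in $\V(\Z C_p)$ being trivial you only conclude $u^p\in\V(\Z G,A)$, so Theorem~\ref{PAPower} yields the PAP Property for $u^p$, not for $u$ --- and it is $u$ that must satisfy it for Propositions~\ref{ZassenhausVersusPAP} and~\ref{MultiplicitiesLinear} to apply. The paper instead gets PAP for \emph{all} torsion units from Corollary~\ref{PAPNilpotentIndexp}, whose proof rests on \cite[Lemma~5.9]{MargolisdelRioCW1}: any torsion unit outside $\V(\Z G,A)$ is already rationally conjugate to an element of $G$. This same lemma is what disposes of your case ``$p\mid n$ with support leaving $A$'', which your sketch never actually handles (for $u\in\V(\Z G,A)$ the support lies in $A$ regardless of whether $p\mid n$). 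Second, the normality reduction: if $A$ is not normal then $G/\core_G(A)$ is indeed affine of prime degree, hence metacyclic, but this property of a quotient does not make $G$ itself cyclic-by-abelian, so \cite{CaicedoMargolisdelRio2013} does not apply as claimed; you would need a normal cyclic subgroup of $G$ with abelian quotient, which does not follow. (You were right to worry about this point --- the paper's proof silently treats $A$ as normal, since $\V(\Z G,A)$ and Corollary~\ref{PAPNilpotentIndexp} presuppose it --- but your fix is a non sequitur as stated.)
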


\section{Connections between the problems}\label{SectionConnections}

The logical connections between the problems are collected in the following proposition.

\begin{proposition}\label{AllProblems} For a finite group $G$ the implications pictured in Figure~\ref{Logic} hold.
\begin{figure}[h]
\centering
\begin{equation*}
  \xymatrix{
  (ZC)  \ar@{=>}[r] & (PAP) \ar@{=>}[r] & (KP) \ar@{<=>}[r] & (Gen-BP) \ar@{=>}[r] \ar@{=>}[d]  & (SP) \ar@{=>}[r] & (PQ)  \\
  &  & & (BP) & &
}
\end{equation*}
\caption{Implications between some Zassenhaus Conjecture related questions.}
\label{Logic}
\end{figure}
\end{proposition}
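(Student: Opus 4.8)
The plan is to establish the six arrows of Figure~\ref{Logic} one at a time, isolating the single genuine construction at the end. Two arrows are immediate. The implication (ZC)$\Rightarrow$(PAP) is the observation recorded just before the statement: rationally conjugate units have equal partial augmentations and every $g\in G$ satisfies the PAP Property. For (SP)$\Rightarrow$(PQ) one notes that the prime graph is a function of the spectrum alone, its vertices being the primes that occur as orders and $p,q$ being joined exactly when $pq$ lies in the spectrum; hence equal spectra force equal prime graphs. Two further arrows follow from a single bookkeeping identity. Fix a torsion unit $u\in\V(\Z G)$ of order $n$; its total augmentation is $1$ and, by Hertweck's vanishing theorem, $\pa{u}{G[m]}=0$ whenever $m\nmid n$, so that $\sum_{m\mid n}\pa{u}{G[m]}=1$. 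For (Gen-BP)$\Rightarrow$(SP), if $G$ had no element of order $n$ then $G[n]=\emptyset$ and (Gen-BP) would annihilate every summand, contradicting the sum being $1$; thus $G$ has an element of order $n$, and since $G\subseteq\V(\Z G)$ the two spectra coincide. For (Gen-BP)$\Rightarrow$(BP), when $n=p^{m}$ Hertweck confines the support of $u$ to orders $1,p,\dots,p^{m}$, (Gen-BP) kills all of these except $p^{m}$, and the identity then yields $\pa{u}{G[p^{m}]}=1$.

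For (PAP)$\Rightarrow$(Gen-BP) I would evaluate $\PAP(k)$ at the trivial class $C=\{1\}$. Since $D^{k}=\{1\}$ exactly when the order of $D$ divides $k$, and $\pa{u}{D}=0$ unless that order divides $n$, the right-hand side collapses to $\sum_{d\mid\gcd(n,k)}\pa{u}{G[d]}$, whereas the left-hand side $\pa{u^{k}}{\{1\}}$ is, by the Berman--Higman theorem, equal to $1$ if $u^{k}=1$ and to $0$ otherwise. Choosing $k=e$ for each proper divisor $e$ of $n$ gives $\sum_{d\mid e}\pa{u}{G[d]}=0$, and a short induction on $e$ (M\"obius inversion over the divisors of $n$) yields $\pa{u}{G[e]}=0$ for every proper divisor $e$; together with Hertweck's vanishing for $m\nmid n$ this is precisely (Gen-BP). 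In view of the equivalence proved next, this also supplies the arrow (PAP)$\Rightarrow$(KP).

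It remains to prove (KP)$\Leftrightarrow$(Gen-BP), where the real content lies. The direction (KP)$\Rightarrow$(Gen-BP) is short: if $u$ of order $n$ is conjugate in $\Q H$ to $g\in G\le H$, then $g$ has order $n$, rationally conjugate units share all partial augmentations, and $H[m]\cap G=G[m]$ give $\pa{u}{G[m]}=\pa{u}{H[m]}=\pa{g}{H[m]}=\delta_{m,n}$. For the converse I would embed $G$ into $H=\operatorname{Sym}(G)$ through the left regular representation. There an element of order $d$ acts without fixed points as a product of $|G|/d$ cycles of length $d$, so its cycle type depends only on $d$; consequently any two elements of $G$ of equal order are conjugate in $H$, and each $H$-class meets $G$ in elements of a single order. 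Choose $g\in G$ of order $n$ (available by the (SP) argument above). For every $j$ the unit $u^{j}$ has order $n_{j}=n/\gcd(n,j)$, and (Gen-BP) applied to $u^{j}$ concentrates its partial augmentations on the classes of order $n_{j}$, where they sum to $1$; since those classes fuse in $H$ to the single class of $g^{j}$, one computes $\pa{u^{j}}{K}=1$ for $K=[g^{j}]_{H}$ and $\pa{u^{j}}{K}=0$ for every other $K\in\Cl(H)$. In particular all of these numbers are nonnegative, so the Marciniak--Ritter--Sehgal--Weiss criterion, applied inside $\Q H$, shows that $u$ is rationally conjugate to some $h\in H$; comparing the augmentations at $j=1$ forces $[h]_{H}=[g]_{H}$, whence $h$ is $H$-conjugate to $g\in G$ and $u$ is rationally conjugate to $g$, i.e.\ (KP) holds.

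The main obstacle is precisely this last direction: one must exhibit an enveloping finite group in which the relevant conjugacy classes amalgamate, and the decisive point is that the regular representation turns equality of orders into equality of cycle types. This is exactly what lets (Gen-BP), applied simultaneously to all powers $u^{j}$, produce the nonnegative, delta-like partial augmentations in $\operatorname{Sym}(G)$ that the Marciniak--Ritter--Sehgal--Weiss criterion requires, thereby upgrading the mere matching of augmentations into genuine rational conjugacy.
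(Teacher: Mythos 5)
Your proof is correct and follows essentially the same route as the paper: the decisive steps --- deriving (PAP)$\Rightarrow$(Gen-BP) by evaluating $\PAP(k)$ at the trivial class and inducting over divisors of $n$, and proving (Gen-BP)$\Rightarrow$(KP) by embedding $G$ in $\operatorname{Sym}(G)$ via the regular action so that classes of elements of equal order fuse, then invoking the Marciniak--Ritter--Sehgal--Weiss criterion --- are exactly the paper's arguments. Your additional details (the explicit treatment of (Gen-BP)$\Rightarrow$(SP)/(BP), which the paper leaves as routine, and pinning down the conjugating class at $j=1$) are correct refinements of the same proof.
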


\begin{proof}
Only the fact that (PAP) implies (Gen-BP) and that (KP) and (Gen-BP) are equivalent needs additional explanation.

Suppose that $G$ satisfies the PAP Property.
Let $u$ be an element of order $n$ in $\V(\Z G)$.
If $d$ is a proper divisor of $n$ then $u^d\ne 1$ and hence, by our assumption we have
\[0=\pa{u^d}{1} = \sum_{\substack{C\in \Cl(G), \\ C^d=1}} \pa{u}{C}.\]
Arguing by induction on $d$ we deduce that for every proper divisor $d$ of $n$ we have
\[\pa{u}{G[d]}=\sum_{\substack{C \in \Cl(G), \\ C \subseteq G[d]}} \pa{u}{C}=0.\]
On the other hand if $C$  is a conjugacy class of $G$ formed by elements of order not dividing $n$ then by the above mentioned \cite[Theorem 2.3]{HertweckBrauer} $\pa{u}{C}=0$.
We conclude that if $m\ne n$ then $\pa{u}{G[m]}=0$.

Assume that (KP) has a positive solution for $G$ and let $u$ be an element of order $n$ in $\V(\Z G)$.
By assumption, $G$ is contained as a subgroup in a finite group $H$ such that $u$ is conjugate to an element of $G$ in $\Q H$.
Since the support of $u$ clearly consists only of elements in $G$ and $G[m]=G\cap H[m]$, we have $\varepsilon_{G[m]}(u) = \varepsilon_{H[m]}(u)=0$ for any integer $m\ne n$. So the (Gen-BP) property follows for $u$.

Suppose that (Gen-BP) holds for $G$, i.e. every torsion element of $\V(\Z G)$ satisfies the property given in (Gen-BP). Let $u$ be a torsion unit in $\V(\Z G)$.
Embed $G$ in the natural way in the symmetric group acting on $G$, i.e. using the regular action of $G$ on itself. Call this symmetric group $H$. Then the cycle type of an element of order $k$ in $G$ viewed as an element of $H$ is given as $|G|/k$ cycles of length $k$. So two elements of $G$ of the same order are conjugate in $H$. In particular all conjugacy classes of elements of the same order in $G$ fuse into one conjugacy class in $H$. Since $u$ and all its powers satisfy the property stated in (Gen-BP) this means that for any power $u^k$ of $u$ there exists exactly one conjugacy class $C$, consisting of elements of the same order as $u^k$, in $H$ such that $\varepsilon_C(u^k) = 1$ and $\varepsilon_{C'}(u^k) = 0$ for any other conjugacy class $C'$.
Moreover, $C$ contains an element from $G$. So $u$ is conjugate in $\Q H$ to an element of $G$ by the above cited \cite[Theorem 2.5]{MarciniakRitterSehgalWeiss1987}.
\end{proof}

One fact connected to (Gen-BP) was recently realized by Kimmerle and Konovalov which follows from a proof of Hertweck \cite{HertweckOrders}.

\begin{proposition} \cite{KimmerleKonovalov2016}
Let $G$ be a finite group and assume that for every element $u \in \V(\Z G)$ of finite order $n$ there exists $C \in \Cl(G)$ consisting of elements of order $n$ such that $\pa{u}{C} \neq 0$. Then this property also holds for solvable extensions of $G$.

In particular, if (Gen-BP) holds for $G$ then (SP) holds for solvable extensions of $G$.
\end{proposition}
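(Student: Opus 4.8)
The plan is to reduce to a single prime-index extension and then argue by induction, using Hertweck's results as the engine. First I would reformulate the hypothesis: write $\mathrm{P}(G)$ for the stated property, and observe that it is equivalent to the assertion that $\pa{u}{G[n]} \neq 0$ for every torsion unit $u \in \V(\Z G)$ of order $n$. Indeed $\pa{u}{G[n]} = \sum_{C \subseteq G[n]} \pa{u}{C}$, so this sum is nonzero exactly when some class $C$ of order-$n$ elements satisfies $\pa{u}{C} \neq 0$. A solvable extension $H$ of $G$ (that is, $G \trianglelefteq H$ with $H/G$ solvable) admits a chain $G = G_0 \trianglelefteq G_1 \trianglelefteq \cdots \trianglelefteq G_r = H$ in which each quotient $G_{i+1}/G_i$ is cyclic of prime order. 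Hence it suffices to prove the inductive step: if $N \trianglelefteq L$ with $[L:N] = p$ prime and $\mathrm{P}(N)$ holds, then $\mathrm{P}(L)$ holds.

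For the inductive step, fix a torsion unit $u \in \V(\Z L)$ of order $n$ and consider the ring homomorphism $\varrho \colon \Z L \to \Z[L/N]$ induced by the quotient map. Since $L/N$ is cyclic of order $p$, hence abelian, Higman's theorem forces $\varrho(u)$ to be a trivial unit, so $\varrho(u) = \bar g$ for some $g \in L$. Two structural facts organize the analysis. First, because $L/N$ is abelian, every conjugacy class of $L$ is contained in a single coset of $N$; in particular the classes meeting $N$ are exactly those lying inside $N$. Second, by Hertweck's vanishing theorem \cite[Theorem 2.3]{HertweckBrauer} we have $\pa{u}{C} = 0$ whenever the order of the elements of $C$ does not divide $n$, while the Berman--Higman theorem gives $\pa{u}{1} = 0$ for $n > 1$, and $\sum_{C \in \Cl(L)} \pa{u}{C} = 1$.

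I would then split according to the image $\varrho(u)$. If $\varrho(u) = 1$ and $p \nmid n$, then every element of $L$ whose order divides $n$ already lies in $N$ (elements outside $N$ have order divisible by $p$), so all nonzero partial augmentations of $u$ sit on classes inside $N$ and $L[n] \subseteq N$; the task becomes to deduce $\pa{u}{L[n]} \neq 0$ from $\mathrm{P}(N)$. The remaining configurations, namely $\varrho(u) = 1$ with $p \mid n$ and $\varrho(u)$ of order $p$ (which forces $p \mid n$ since $\varrho(u)^n = 1$), additionally allow order-$n$ elements in the nontrivial cosets of $N$; here I would exploit the folklore congruence \eqref{FolkoreCongruence} together with the relation $\varrho(u^{p}) = 1$ to control the contributions coming from the classes outside $N$, thereby reducing the computation of $\pa{u}{L[n]}$ to data living over $N$.

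The main obstacle is precisely this transfer from $L$ down to $N$: although the partial augmentations of $u$ are supported on $N$-cosets, the unit $u$ itself need not lie in $\Z N$, so $\mathrm{P}(N)$ cannot be invoked directly. Overcoming this is exactly where the techniques of Hertweck \cite{HertweckOrders} are required—relating the $L$-partial augmentations of $u$ (and of its $p$-th power, whose image is trivial) to partial-augmentation data over $N$ via the congruences \eqref{FolkoreCongruence} and a $p$-local counting argument—and this is the step I expect to be genuinely delicate. Once $\mathrm{P}(L)$, and hence inductively $\mathrm{P}(H)$, is established, the final assertion follows: (Gen-BP) for $G$ implies $\mathrm{P}(G)$, because $\pa{u}{G[m]} = 0$ for all $m \neq n$ together with $\sum_{C} \pa{u}{C} = \sum_{m} \pa{u}{G[m]} = 1$ forces $\pa{u}{G[n]} = 1 \neq 0$; and $\mathrm{P}(H)$ for a solvable extension $H$ immediately yields (SP) for $H$, since a nonzero value $\pa{u}{C}$ on a class of order-$n$ elements exhibits an element of order $n$ in $H$ for every order $n$ occurring in $\V(\Z H)$.
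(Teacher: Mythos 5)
You should know at the outset that the paper contains no proof of this proposition: it is quoted from \cite{KimmerleKonovalov2016}, where it is extracted from Hertweck's proof of the spectrum theorem for solvable groups \cite{HertweckOrders}. So your attempt has to stand on its own, and it does not. The first problem is your opening ``reformulation''. The hypothesis, call it $\mathrm{P}(G)$, asserts the existence of a single class $C \subseteq G[n]$ with $\pa{u}{C} \neq 0$; your claimed equivalent, $\pa{u}{G[n]} \neq 0$, is strictly stronger, because nothing prevents the nonzero partial augmentations on order-$n$ classes from cancelling in the sum $\pa{u}{G[n]} = \sum_{C \subseteq G[n]} \pa{u}{C}$. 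A nonzero sum yields a nonzero summand, but not conversely. Since your induction is built to propagate the sum-nonvanishing statement, it fails already at the base: the proposition's hypothesis hands you only the weak form for $G$. (For the ``in particular'' clause the strong form does hold, since (Gen-BP) forces $\pa{u}{G[n]} = 1$; but the general statement must be proved from the weak hypothesis alone.)

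The second, and decisive, problem is that the inductive step is never carried out. After reducing to $N \trianglelefteq L$ of prime index and recording $\varrho(u) = \bar g$ via Higman --- both fine --- you state yourself that transferring $\mathrm{P}(N)$ to a conclusion about $u \in \V(\Z L)$ is ``genuinely delicate'' and that Hertweck's techniques ``are required'', without supplying them. But that transfer is the entire mathematical content of the proposition. A torsion unit $u \in \V(\Z L, N)$ with $N$ an arbitrary finite group is precisely the territory of Sehgal's Problem, where no general tool applies: Theorem~\ref{PAPower} of this paper needs $N$ nilpotent, and the result of \cite[Proposition~4.2]{Hertweck2006} needs $N$ a $p$-group. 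The congruence \eqref{FolkoreCongruence} only relates $u$ to its own powers inside $\Z L$, and $\varrho(u^p) = 1$ by itself localizes nothing; neither ingredient extracts from $\mathrm{P}(N)$ --- a statement about units of $\Z N$ --- any information about a unit that need not lie in, nor be conjugate into, $\Z N$. The argument your sketch points toward (Hertweck's) instead exploits normal $p$-subgroups, the fact that torsion units mapping to $1$ modulo such a subgroup are rationally conjugate to elements of it (hence have $p$-power order), and the exact identity expressing each partial augmentation of the image $\bar u$ under $\Z L \to \Z[L/N]$ as the sum of the partial augmentations of $u$ over the $L$-classes lying above the given class; nothing playing this role appears in your proposal. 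As it stands, you have a plausible plan whose crucial lemma is left as an acknowledged black box, not a proof.
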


\section{The PAP Property}\label{SectionPAP}

In this section we first analyze some features of the PAP Property and then we show that the PAP Property holds for a particularly interesting class of units.
First observe that the rational conjugacy criterion of Marciniak-Ritter-Sehgal-Weiss can be relaxed in the presence of the PAP Property.

\begin{proposition}\label{ZassenhausVersusPAP}
Let $G$ be a finite group. If a torsion element $u \in \V(\Z G)$ satisfies the PAP Property then $u$ is rationally conjugate to an element of $G$ if and only if $\pa{u}{C}\ge 0$ for every $C \in \Cl(G)$.
\end{proposition}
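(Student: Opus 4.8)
The plan is to reduce the statement to the Marciniak--Ritter--Sehgal--Weiss criterion \cite[Theorem 2.5]{MarciniakRitterSehgalWeiss1987}, exploiting the PAP Property to pass from a condition on all powers of $u$ to a condition on $u$ alone.

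First, the forward implication is immediate and requires no hypothesis on $u$ beyond rational conjugacy: if $u$ is rationally conjugate to some $g \in G$, then the partial augmentations of $u$ coincide with those of $g$, and the latter are either $0$ or $1$. In particular $\pa{u}{C} \ge 0$ for every $C \in \Cl(G)$.

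For the converse I would assume $\pa{u}{C} \ge 0$ for every $C \in \Cl(G)$ and verify the hypothesis of the criterion, namely that $\pa{v}{C} \ge 0$ for every $v \in \GEN{u}$ and every $C \in \Cl(G)$. Every $v \in \GEN{u}$ has the form $u^k$ for some integer $k$, so by the PAP Property we may write
$$\pa{u^k}{C} = \sum_{\substack{D \in \Cl(G), \\ D^k = C}} \pa{u}{D}.$$
Each summand $\pa{u}{D}$ is nonnegative by hypothesis, hence so is the whole sum; that is, $\pa{u^k}{C} \ge 0$ for every $k$ and every $C$. Applying \cite[Theorem 2.5]{MarciniakRitterSehgalWeiss1987} then yields that $u$ is rationally conjugate to an element of $G$.

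I expect essentially no obstacle here: once the PAP Property is available, the nonnegativity of the partial augmentations of $u$ propagates automatically to all of its powers, which is exactly what the rational conjugacy criterion demands. The only conceptual content is the observation that the PAP Property is precisely the bookkeeping device that collapses the ``for all $v \in \GEN{u}$'' quantifier in the criterion into a single condition on $u$ itself.
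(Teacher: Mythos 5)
Your proof is correct and is precisely the argument the paper intends: the paper states Proposition~\ref{ZassenhausVersusPAP} without proof as an immediate consequence of the Marciniak--Ritter--Sehgal--Weiss criterion, since the PAP Property converts $\pa{u}{C}\ge 0$ for all $C\in \Cl(G)$ into $\pa{u^k}{C}=\sum_{D^k=C}\pa{u}{D}\ge 0$ for every power $u^k$, exactly as you write. No gaps; both directions, including the trivial forward implication, are handled as the paper's surrounding discussion indicates.
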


The next lemma collects a few elementary properties of the PAP Property, the most practical of which is that to verify the PAP Property
we only have to check condition $\PAP(d)$ for $d$ dividing the order of $u$.

\begin{lemma}\label{PAPSimplified}
Let $G$ be a finite group and $u$ a torsion element of order $n$ in $\V(\Z G)$.
	\begin{enumerate}
	\item \label{PAPCoprime} If $k$ is an integer coprime with $n$ then $u$ satisfies $\PAP(k)$.
	\item \label{PAP1n} $u$ satisfies $\PAP(1)$ and $\PAP(n)$.
	\item \label{PAPDivisors} If $u$ satisfies $\PAP(d)$ for every divisor d of $n$ then  $u$ satisfies the PAP Property.
	\item\label{PAPPrimeOrder} If $n$ is prime then $u$ satisfies the PAP Property.
	\end{enumerate}
\end{lemma}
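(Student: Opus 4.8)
The plan is to prove the four parts essentially in the order (a), (b), then to build (c) on top of them, with (d) an immediate consequence. The heart of the argument is part~(\ref{PAPCoprime}), which I would establish by a character-theoretic computation; parts~(\ref{PAP1n}) and~(\ref{PAPPrimeOrder}) are then short, and part~(\ref{PAPDivisors}) follows from a multiplicativity (composition) principle for the $\PAP(k)$ conditions combined with~(\ref{PAPCoprime}).

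For part~(\ref{PAPCoprime}), fix $k$ coprime to $n$, let $\zeta$ be a primitive $n$-th root of unity and let $\sigma_k\in\Gal(\Q(\zeta)/\Q)$ be the automorphism with $\sigma_k(\zeta)=\zeta^k$. The key observation is that since $u^n=1$, any representation affording an irreducible character $\chi\in\Irr(G)$ sends $u$ to a matrix whose eigenvalues are $n$-th roots of unity, so raising to the $k$-th power gives $\chi(u^k)=\sigma_k(\chi(u))$; the same eigenvalue argument shows $\chi(D^k)=\sigma_k(\chi(D))$ for every class $D$ consisting of elements of order dividing $n$. Now apply $\sigma_k$ to the identity $\chi(u)=\sum_{D\in\Cl(G)}\pa{u}{D}\,\chi(D)$. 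By Hertweck's theorem \cite[Theorem~2.3]{HertweckBrauer} the classes $D$ with $\pa{u}{D}\neq 0$ consist of elements of order dividing $n$, so the previous remark yields
\[\chi(u^k)=\sigma_k(\chi(u))=\sum_{D\in\Cl(G)}\pa{u}{D}\,\chi(D^k)=\sum_{C\in\Cl(G)}\Big(\sum_{D^k=C}\pa{u}{D}\Big)\chi(C).\]
Comparing this with $\chi(u^k)=\sum_{C}\pa{u^k}{C}\,\chi(C)$ and using that the character table of $G$ is an invertible square matrix, I can equate coefficients of each $\chi(C)$ to conclude $\pa{u^k}{C}=\sum_{D^k=C}\pa{u}{D}$ for every $C$, which is exactly $\PAP(k)$. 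Note that coprimality is used precisely in that $\sigma_k$ is a \emph{field} automorphism, so that it distributes over the sum; this fails for non-coprime $k$, which is why the coprime case is special.

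For part~(\ref{PAP1n}), $\PAP(1)$ is immediate since $D^1=C$ forces $D=C$. For $\PAP(n)$ note $u^n=1$, so the left side is $\pa{1}{C}$, which is $1$ for $C=\{1\}$ and $0$ otherwise; on the right, any class $D$ with $\pa{u}{D}\neq 0$ consists of elements of order dividing $n$ (Hertweck), hence $D^n=\{1\}$, so the sum is nonzero only for $C=\{1\}$, where it equals the total augmentation $\sum_D\pa{u}{D}=1$. For part~(\ref{PAPDivisors}) I would first reduce an arbitrary integer $k$ to $k'\in\{0,\dots,n-1\}$ with $k\equiv k'\pmod n$: since $u^k=u^{k'}$ and, for every $D$ with $\pa{u}{D}\neq 0$, the elements of $D$ have order dividing $n$ so that $D^k=D^{k'}$, we get $\PAP(k)\Leftrightarrow\PAP(k')$ (the case $k'=0$ being trivial). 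The main new ingredient is a composition principle: if $u$ satisfies $\PAP(a)$ and $u^a$ satisfies $\PAP(b)$, then $u$ satisfies $\PAP(ab)$. This follows by substituting one identity into the other,
\[\pa{u^{ab}}{C}=\sum_{E^b=C}\pa{u^a}{E}=\sum_{E^b=C}\ \sum_{D^a=E}\pa{u}{D}=\sum_{D^{ab}=C}\pa{u}{D},\]
where the last step reindexes via $E=D^a$, using that $D^a=E$ and $E^b=C$ together are equivalent to $D^{ab}=C$. Given this, for $1\le k'\le n-1$ set $d=\gcd(k',n)$ and write $k'=d\,m$ with $\gcd(m,n/d)=1$; then $u^d$ has order $n/d$, so $u^d$ satisfies $\PAP(m)$ by part~(\ref{PAPCoprime}), while $u$ satisfies $\PAP(d)$ by hypothesis since $d\mid n$, and the composition principle gives $\PAP(k')=\PAP(d\,m)$. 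Finally, part~(\ref{PAPPrimeOrder}) is immediate: if $n$ is prime its only divisors are $1$ and $n$, covered by part~(\ref{PAP1n}), so part~(\ref{PAPDivisors}) applies.

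The step I expect to require the most care is part~(\ref{PAPCoprime}): one must justify the eigenvalue description of $\chi(u^k)$ and $\chi(D^k)$, correctly restrict to the classes on which $\pa{u}{D}\neq 0$ so that the power map agrees with the Galois action, and invoke the invertibility of the character table to pass from the character identities back to the partial augmentations. The reindexing in the composition principle and the reduction modulo $n$ are routine bookkeeping once Hertweck's vanishing theorem is in hand.
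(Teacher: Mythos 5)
Your proof is correct, but it reaches the two substantive parts by a genuinely different route than the paper. For part (\ref{PAPCoprime}) the paper never inverts the character table: it works with the single character $\chi$ of $\GEN{u}\times G$ afforded by the module $M=\Z G$ with action $(v,g)x=vxg^{-1}$, whose values $\chi(v,g)=|C_G(g)|\,\pa{v}{g^G}$ encode partial augmentations directly (equation \eqref{CharacterPA}); after observing that $C^l$ (with $kl\equiv 1 \bmod n$) is the only class that can contribute to the right-hand side, one Galois twist of the single value $\chi(u,g^l)$ yields the sharper single-class identity $\pa{u^k}{C}=\pa{u}{C^l}$. Your alternative --- applying $\sigma_k$ to $\chi(u)=\sum_{D}\pa{u}{D}\chi(D)$ for every $\chi\in\Irr(G)$ and then using invertibility of the character table --- is sound; you correctly flag the two places where Hertweck's vanishing theorem enters, namely to ensure that the contributing values $\chi(D)$ lie in $\Q(\zeta_n)$ so that $\sigma_k$ applies termwise, and that $\sigma_k(\chi(D))=\chi(D^k)$ on those classes. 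Your version is more elementary (no auxiliary module is constructed), at the cost of not producing $\pa{u^k}{C}=\pa{u}{C^l}$ explicitly and of not setting up the character \eqref{CharacterPA}, which the paper reuses verbatim in the proof of Theorem~\ref{PAPower}. For part (\ref{PAPDivisors}) your composition principle --- $\PAP(a)$ for $u$ together with $\PAP(b)$ for $u^a$ gives $\PAP(ab)$, by pure reindexing since $(D^a)^b=D^{ab}$ holds unconditionally on conjugacy classes --- is a cleaner organization than the paper's argument, which fixes $C$, splits on whether the order $m$ of its elements divides $n/d$ for $d=\gcd(n,k)$, and uses an explicit inverse $h$ of $k/d$ modulo $n/d$ together with the equivalence $D^k=C \Leftrightarrow D^d=C^h$ on contributing classes; your version confines Hertweck's theorem to the preliminary reduction of $k$ modulo $n$ (needed because $D^k=D^{k'}$ is only guaranteed for classes of element order dividing $n$), whereas the paper invokes it inside the case analysis. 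Both treatments ultimately apply part (\ref{PAPCoprime}) to $u^d$, so the coprime case is the engine either way, and your parts (\ref{PAP1n}) and (\ref{PAPPrimeOrder}) coincide with the paper's.
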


\begin{proof}
\eqref{PAPCoprime}
Let $C\in \Cl(G)$ and let $m$ denote the order of the elements in $C$.
If $m\nmid n$ then the order of the elements of every $D\in \Cl(G)$ with $D^k=C$ does not divide $n$ and hence 
$\pa{u^k}{C}=\pa{u}{D}=0$ for every such $D$ by the above mentioned \cite[Theorem 2.3]{HertweckBrauer}. Then the formula 
defining $\PAP(k)$ holds for $C$.

Suppose otherwise that $m\mid n$ and let $l$ be an integer such that $kl\equiv 1 \mod n$. Let $D \in \Cl(G)$ be such 
that $D^k = C$ and $\pa{u}{D} \ne 0$. 
Then the order of the elements in $D$ divides $n$, again by \cite[Theorem 2.3]{HertweckBrauer}, and so the order of 
elements in $D$ and $D^k$ is the same. Hence there is exactly one class which satisfies the properties of $D$, namely 
$C^l$, and we have to prove that $\pa{u^k}{C}=\pa{u}{C^l}$.
Define the $\Z(\GEN{u}\times G)$-module $M$ with $\Z G$ as underlying additive group and product given as follows:
\[(v,g)x=vxg^{-1}, \quad v\in \GEN{u}, \ g\in G, \ x\in M=\Z G.\]
As $M$ is free as $\Z$-module there is a $\Z$-representation $\rho$ associated to this module structure. The character $\chi$ afforded by $\rho$ is given by the following formula:
\begin{equation}\label{CharacterPA}
\chi(v,g)=|C_G(g)| \; \pa{v}{g^G} \quad (v\in \GEN{u}, \ g \in G)
\end{equation}
(see \cite[38.12]{Sehgal1993} or \cite[Lemma~1]{Weiss1991}).
Let $g\in C$. By assumption the order of $(u^k,g)$ is $n$ and $C_G(g)=C_G(g^l)$.
Then the eigenvalues of $\rho(g)$ belong to $\GEN{\zeta_n}$, where $\zeta_n$ denotes a primitive $n$-th root of unity.
As $\gcd(l,n)=1$, there is an automorphism $\sigma$ of $\Q(\zeta_n)$ given by $\sigma(\zeta_n)=\zeta_n^l$. Then $\chi(u^k,g)=\sigma(\chi(u,g^l))$. Thus, applying \eqref{CharacterPA} we have  $\pa{u^k}{C}=\pa{u^k}{g^G} = \pa{u}{(g^l)^G}=\pa{u}{C^l}$, as desired.

\eqref{PAP1n}
$\PAP(1)$ is clear. If $D\in \Cl(G)$, with
$D^n\ne 1$ then $\pa{u}{D}=0$. Therefore, if $C\in \Cl(G)\setminus \{1\}$ then
\[\sum_{\substack{D\in \Cl(G),\\ D^n=C}} \pa{u}{D} = 0 = \pa{1}{C} = \pa{u^n}{C}.\]
Moreover,
\[\sum_{\substack{D\in \Cl(G),\\ D^n=1}} \pa{u}{D}=\sum_{D\in \Cl(G)} \pa{u}{D} = 1 = \pa{1}{1}=\pa{u^n}{1}.\]
Therefore $\PAP(n)$ holds.

\eqref{PAPDivisors} Suppose that $\PAP(d)$ holds for every $d\mid n$ and let $k$ be an arbitrary integer.
Let $d=\gcd(n,k)$. Then $\gcd\left(\frac{n}{d},\frac{k}{d}\right)=1$ and the orders of $u^k$ and $u^d$ are both $\frac{n}{d}$.
Let $C\in \Cl(G)$ and let $m$ be the order of the elements of $C$.

Assume that $m\nmid \frac{n}{d}$. Then $\pa{u^k}{C}=0$.
Furthermore, if $D\in \Cl(G)$ with $D^k=C$ then the order of the elements in $D$ does not divide $n$.
Thus $\pa{u}{D}=0$ and hence $\PAP(k)$ holds in this case.

Suppose otherwise that $m$ divides $\frac{n}{d}$. 
Let $h$ be an integer such that $\frac{k}{d}h \equiv 1 \mod \frac{n}{d}$. 
Let $D \in \Cl(G)$ such that $\pa{u}{D} \ne 0$. Then the order of the elements in $D$ divides $n$ and so
$D^k=C$ if and only if $D^d=C^h$.
Using \eqref{PAPCoprime} and the hypothesis that $\PAP(d)$ holds we deduce that
	$$
	\pa{u^k}{C} = \pa{(u^d)^{k/d}}{C} = \pa{u^d}{C^h} = \sum_{\substack{D\in \Cl(G),\\ D^d = C^h}} \pa{u}{D}
			= \sum_{\substack{D\in \Cl(G), \\ D^k = C}} \pa{u}{D}.
	$$
Here the second equality follows as in the poof of \eqref{PAPCoprime}, noting that the order of $u^d$ is $\frac{n}{d}$.

\eqref{PAPPrimeOrder} is a consequence of \eqref{PAP1n} and \eqref{PAPDivisors}.
 \end{proof}

If $N$ is a normal subgroup of $G$ let $\V(\Z G,N)$ denote the group of units of $\Z G$ mapping to the identity by the linear span of the natural homomorphisms $G\rightarrow G/N$.
We now prove the PAP Property for the torsion elements of $\V(\Z G,N)$ for $N$ a nilpotent normal subgroup.

\begin{theorem}\label{PAPower}
Let $G$ be a finite group and $N$ a normal nilpotent subgroup of $G$. Then every torsion element $u$ of $\V(\Z G,N)$ satisfies the PAP Property.
\end{theorem}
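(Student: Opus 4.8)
The plan is to prove the statement one prime at a time and then assemble the general case. First I would reduce, via Lemma~\ref{PAPSimplified}, to a single identity per prime. A short computation shows that the power maps on classes compose well: if $u$ satisfies $\PAP(a)$ and $u^a$ satisfies $\PAP(b)$, then, using the disjoint decomposition $\{D : D^{ab}=C\}=\bigsqcup_{E^b=C}\{D : D^a=E\}$, the unit $u$ satisfies $\PAP(ab)$. Since every power $u^a$ again lies in $\V(\Z G,N)$, an induction on the number of prime factors of a divisor $d$ of $n=|u|$ (using Lemma~\ref{PAPSimplified}\eqref{PAPCoprime} to absorb any factor coprime to the current order) together with Lemma~\ref{PAPSimplified}\eqref{PAPDivisors} reduces the whole theorem to the following claim: \emph{every} torsion unit $u\in\V(\Z G,N)$ satisfies $\PAP(p)$ for \emph{every} prime $p$.

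To prove $\PAP(p)$ I would reduce to the case in which $N$ is a $p$-group. Since $N$ is nilpotent, $N=N_p\times M$ with $M=O_{p'}(N)=\prod_{q\ne p}N_q$; being characteristic in $N$, the $p'$-group $M$ is normal in $G$. Passing to $\bar G=G/M$, the image $\bar u$ of $u$ lies in $\V(\Z\bar G,N/M)$, where $N/M\cong N_p$ is a normal $p$-subgroup of $\bar G$. The aim of this step is a transfer lemma: $\PAP(p)$ for $\bar u$ in $\bar G$ implies $\PAP(p)$ for $u$ in $G$. The folklore congruence \eqref{FolkoreCongruence} already gives both sides of $\PAP(p)$ equal modulo $p$, so the task is to upgrade this to an exact equality by comparing, fibre by fibre over $G\to\bar G$, the classes linked by the $p$-th power map. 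Here the fact that $M$ is a $p'$-group is decisive: the projection $G\to\bar G$ is injective on $p$-elements, and combined with Hertweck's vanishing (only classes of elements of order dividing $|u|$ contribute) this should force the partial augmentations of $u$ over each fibre to match those of $\bar u$ compatibly with $p$-th powering.

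It then remains to treat $N$ a normal $p$-subgroup. First I would pin down the order of $u$: reducing modulo $p$, the kernel of $\F_pG\to\F_p(G/N)$ equals $\F_pG\cdot\omega(\F_pN)$ and is nilpotent, because $N\le O_p(G)$ acts trivially on every simple $\F_pG$-module; hence $\bar u-1$ is nilpotent and $\bar u$ is unipotent. Writing $n=p^a m$ with $p\nmid m$, the $p'$-part $w=u^{p^a}$ then reduces to $\bar w=\bar u^{p^a}$, whose order is simultaneously a power of $p$ and a divisor of $m$, so $\bar w=1$; thus $w\equiv 1\pmod p$, whence the coefficient of $1$ in $w$ is $\equiv1\pmod p$. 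By the Berman--Higman theorem a nontrivial torsion unit has coefficient $0$ at $1$, so $w=1$ and $u$ has $p$-power order. With $u$ a $p$-element of $\V(\Z G,N)$ and $N$ a normal $p$-subgroup, a theorem of Weiss \cite{Weiss1991} applies and yields that $u$ is rationally conjugate to an element of $N\subseteq G$. Consequently $u$ satisfies the full PAP Property, in particular $\PAP(p)$, completing this case.

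The routine parts are the reductions of the first and third paragraphs; the genuine difficulty is the transfer lemma of the second paragraph. Establishing the exact (not merely mod $p$) equality of partial augmentations across the $p'$-quotient $G\to G/O_{p'}(N)$, respecting the $p$-th power map, is where the argument must do real work, and it is the step I expect to require the most care. It is also the step that explains why one obtains only the PAP Property and not rational conjugacy into $N$: the latter can fail once $G/N$ is nontrivial (consistent with the metabelian counterexample of Eisele and Margolis), whereas PAP is local at each prime and survives the reduction.
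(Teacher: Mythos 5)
Your reductions in the first and third paragraphs are correct: the composition identity (that $\PAP(a)$ for $u$ together with $\PAP(b)$ for $u^a$ gives $\PAP(ab)$, via the disjoint decomposition of $\{D : D^{ab}=C\}$) is valid since powers of $u$ stay in $\V(\Z G,N)$; and in the case of a normal $p$-subgroup $N$, your unipotence argument pinning the order of $u$ to a $p$-power is standard and sound, after which rational conjugacy into $N$ is indeed known --- though the correct reference is \cite[Proposition~4.2]{Hertweck2006} (which is built on Weiss's methods) rather than \cite{Weiss1991} itself; the paper cites exactly this. The fatal problem is the ``transfer lemma'' of your second paragraph, which you flag as needing care but which in fact fails as proposed, and which carries the entire content of the theorem. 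The injectivity of $G \to \bar G = G/O_{p'}(N)$ on $p$-elements is not the relevant issue, because at this stage $u$ is an arbitrary torsion unit: the classes $D$ with $\pa{u}{D}\neq 0$ consist of elements of order dividing $n=|u|$, typically of composite order, and distinct such classes of $G$ fuse in $\bar G$. Since $\pa{\bar u}{\bar C}=\sum_{D\mapsto \bar C}\pa{u}{D}$, the validity of $\PAP(p)$ for $\bar u$ only yields identities for these fibre sums, while the folklore congruence \eqref{FolkoreCongruence} controls individual classes merely modulo $p$; there is no mechanism to separate the fibres and upgrade to the exact class-by-class equality $\pa{u^p}{C}=\sum_{D^p=C}\pa{u}{D}$. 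The situation is in fact worse: after your reduction, $\bar N$ is a $p$-group, so by your own third paragraph $\bar u$ has $p$-power order and is rationally conjugate into $\bar N$; hence almost all partial augmentations of $\bar u$ vanish, and the resulting constraints upstairs say only that each fibre sum is $0$ or $1$ --- very far from $\PAP(p)$ for $u$. In effect your plan would deduce the nilpotent case from the normal-$p$-subgroup case by an elementary quotient argument, and no such elementary passage is available.

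For comparison, the paper takes a completely different and global route, with no prime-by-prime reduction. It considers the $\Z(\GEN{u}\times G)$-module $\Z G$ with action $(v,g)x=vxg^{-1}$, whose character satisfies $\chi(v,g)=|C_G(g)|\,\pa{v}{g^G}$, and invokes \cite[Lemma~3.4]{MargolisdelRioCW1}, which under exactly the hypothesis $u\in\V(\Z G,N)$ with $N$ nilpotent normal establishes the identity $\chi=\sum_{C\in\Cl(G)}\pa{u}{C}\,\ind_{[C]}^{\GEN{u}\times G}(1)$ with $[C]=\GEN{(u,x_C)}$. Evaluating both sides at $(u^k,g)$ then yields $\PAP(k)$ for all $k$ simultaneously. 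That character identity is the genuinely hard input --- its proof uses Weiss-type lattice results at every prime dividing $|N|$ at once, exploiting that all Sylow subgroups of the nilpotent $N$ are normal --- and your missing transfer lemma is essentially equivalent in difficulty to it. So the proposal is a correct skeleton wrapped around an unproved core, and the heuristic offered for that core (fibrewise matching over the $p'$-quotient) does not work.
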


\begin{proof}
For every $C \in \Cl(G)$ fix some $x_C \in C$ and define a subgroup $[C]$ of $\langle u \rangle \times G$ as $[C] = \langle (u,x_C) \rangle$.
Let $\chi$ be the character of $\GEN{u}\times G$ defined in the proof of Lemma~\ref{PAPSimplified}.
This character has been calculated in \cite[Lemma~3.4]{MargolisdelRioCW1}
under the assumption of the theorem.
More precisely we have
\begin{equation}\label{CharacterUnitInZGN}
\chi=\sum_{C \in \Cl(G)} \pa{u}{C} \; \ind_{[C]}^{\GEN{u}\times G}(1).
\end{equation}
Fix some integer $k$, some conjugacy class $C$ and let $g \in C$. Note that for any $D \in \Cl(G)$ we have
    \[\ind_{[D]}^{\GEN{u}\times G}(1)(u^k,g) = \begin{cases}1, & \text{if } g \in  D^k; \\ 0, & \text{otherwise};\end{cases}\]
and $g \in D^k$ if and only if $C = D^k$. So using \eqref{CharacterPA} and \eqref{CharacterUnitInZGN} we obtain
\begin{align*}
|C_G(g)| \pa{u^k}{C} &= \chi(u^k, g) = \sum_{D \in \Cl(G)} \pa{u}{D} \ \ind_{[D]}^{\GEN{u}\times G}(1)(u^k,g) \\
 &= |C_G(g)| \sum_{\substack{D\in \Cl(G), \\ D^k = C}}  \pa{u}{D}.
\end{align*}

Thus $u$ satisfies the PAP Property.
\end{proof}

Combining Theorem~\ref{PAPower} and \cite[Lemma~5.9]{MargolisdelRioCW1} we deduce the following:

\begin{corollary}\label{PAPNilpotentIndexp}
Let $G$ be a finite group with a normal nilpotent subgroup of prime index. Then $G$ satisfies the PAP Property.
\end{corollary}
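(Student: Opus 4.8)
The plan is to show that an arbitrary torsion unit $u\in\V(\Z G)$, say of order $n$, satisfies the PAP Property, and for this it suffices, by Lemma~\ref{PAPSimplified}\eqref{PAPDivisors}, to verify $\PAP(d)$ only for the divisors $d$ of $n$. Let $N$ be the given normal nilpotent subgroup of prime index $p$, so that $G/N$ is cyclic of order $p$. Consider the image $\ov{u}$ of $u$ under the natural map $\Z G\to\Z[G/N]$. Since $G/N$ is abelian, Higman's theorem \cite{Higman1940} forces $\ov{u}=\ov{g}$ for some $g\in G$, and because $\ov{u}^{\,n}=\ov{u^n}=1$ the order of $\ov{u}$ divides $\gcd(p,n)$; hence $\ov{u}$ is either trivial or a generator of $G/N$, the latter only when $p\mid n$.

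First I would dispose of the case $\ov{u}=1$. Here $u$ lies in $\V(\Z G,N)$ by definition, and Theorem~\ref{PAPower} applies verbatim to give the PAP Property for $u$. In particular this settles every $u$ whose order is prime to $p$, since then $\gcd(p,n)=1$ forces $\ov{u}=1$.

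The remaining, and genuinely harder, case is $\ov{u}$ of order $p$, equivalently $p\mid n$ together with $u\notin\V(\Z G,N)$. Theorem~\ref{PAPower} is not directly available: although every power $u^{k}$ with $p\mid k$ again lies in $\V(\Z G,N)$, the powers with $p\nmid k$ project to a generator of $G/N$ and so escape its hypothesis, and it is exactly these powers that control $\PAP(d)$ for $d$ prime to $p$. This is where I would invoke \cite[Lemma~5.9]{MargolisdelRioCW1}, whose role is precisely to handle the torsion units projecting nontrivially to $G/N$. I expect it to yield that such a $u$ is rationally conjugate to an element $g\in G$. Granting this, the PAP Property is immediate: rational conjugacy is inherited by all powers, so $\pa{u^{k}}{C}=\pa{g^{k}}{C}$ and $\pa{u}{D}=\pa{g}{D}$ for every $C,D\in\Cl(G)$, whence
\[
\pa{u^{k}}{C}=\pa{g^{k}}{C}=\sum_{\substack{D\in\Cl(G),\\ D^{k}=C}}\pa{g}{D}=\sum_{\substack{D\in\Cl(G),\\ D^{k}=C}}\pa{u}{D},
\]
the middle equality being the trivial instance of the PAP Property for the group element $g$. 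The main obstacle is thus concentrated entirely in this second case: controlling the units lying outside $\V(\Z G,N)$, for which the structural input of \cite[Lemma~5.9]{MargolisdelRioCW1} is indispensable.
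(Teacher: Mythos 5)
Your proposal is correct and is essentially the paper's own argument: the paper proves this corollary in one line by combining Theorem~\ref{PAPower} (covering the units in $\V(\Z G,N)$, your first case) with \cite[Lemma~5.9]{MargolisdelRioCW1} (which, exactly as you anticipate, gives rational conjugacy --- and hence trivially the PAP Property --- for the torsion units projecting nontrivially to $G/N$, your second case). Your write-up merely makes explicit the dichotomy via the image in $\Z[G/N]$ and Higman's theorem, which the paper leaves implicit.
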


The units satisfying the hypothesis of Theorem~\ref{PAPower} are the focus of attention in the following question.

\begin{quote}
\textbf{Sehgal's Problem} \cite[Research Problem 35]{Sehgal1993}:
Let $N$ be a normal nilpotent subgroup of the finite group $G$. Is every torsion element $u$ of $\V(\Z G,N)$ rationally conjugate to an element in $G$?
\end{quote}

This problem is motivated by a typical strategy when studying the Zassenhaus Conjecture
for groups possessing a ``good'' normal subgroup $N$ which consists in first considering the units in $\V(\Z G,N)$.
It is known that Sehgal's Problem has a positive solution if $N=G$ (and hence $G$ is nilpotent) \cite{Weiss1991},
if $N$ a $p$-group \cite[Proposition~4.2]{Hertweck2006} and under some other particular conditions
(see Theorem~2.4 in \cite{MargolisdelRioCW1} for references and
\cite{MargolisdelRioCW1} and \cite{MargolisdelRioCWAlgorithm} for more results).

\begin{remark}
We have stated the PAP Property only for torsion units but it makes sense for arbitrary elements of a group ring. However one can easily give units of infinite order not satisfying the statement of the PAP Property.
A counterexample to $\PAP(2)$ is $u=1-g+g^2$ with $G=\GEN{g}$ of order coprime with $6$.
This is a unit because it is the result of evaluating the 6-th cyclotomic polynomial at $g$
(see \cite[Proposition~3.1]{MarciniakSehgal2005} or \cite[Proposition~8.1.3]{GRG1}).
\end{remark}

\section{The PAP adapted HeLP Method}\label{SectionPAPHeLP}

In this section we will explain how to adapt the HeLP Method for units satisfying the PAP Property.
First of all we explain the HeLP Method which was introduced by Luthar and Passi in \cite{LutharPassi1989} for ordinary characters, and extended by Hertweck in \cite{Hertweck2008} to Brauer characters.

Fix a finite group $G$ and a positive integer $n$.
Let $\mu_n$ denote the set of complex $n$-th roots of unity.
Let $\Irr(G)$ be the set of ordinary irreducible characters of $G$.
If $p$ is a prime integer then let $G_{p'}$ denote the set of $p$-regular elements of $G$ and let $\IBr_p(G)$ denote the set  of irreducible $p$-Brauer characters of $G$.
We extend the Brauer characters to $G$ by setting $\chi(g)=0$ for $\chi$ a $p$-Brauer character of $G$ and $g\in G\setminus G_{p'}$.
Let
    $$\IRR_n(G) = \Irr(G) \cup \bigcup_{\substack{p \mid |G|, \\ p \nmid n}} \IBr_p(G).$$
Let $\Z_n(G)$ denote the set of arrays $X=(X_{d,C})_{d\mid n,C\in \Cl(G)}$ with integral entries such that
    $$\sum_{C\in \Cl(G)} X_{d,C}=1 \text{ for every } d\mid n.$$
For every $\chi\in \IRR_n(G)$, every $X=(X_{d,C})_{d\mid n,C\in \Cl(G)}\in \Z_n(G)$ and every $\xi\in \mu_n$ let us denote
    \begin{equation}\label{multiplicity}
      \mu(\chi,X,\xi) = \frac{1}{n}\sum_{d\mid n} \sum_{C \in \Cl(G)}  \text{Tr}_{\mathbb{Q}(\zeta^d)/\mathbb{Q}}\left(\chi(C)\xi^{-d}\right) X_{d,C}.
    \end{equation}
Let
    $$\VPA_n(G) = \{X \in \Z_n(G) : \mu(\chi,X,\xi) \in \Z^{\ge 0}, \text{ for every } \chi \in \IRR_n(G), \xi\in \mu_n\}.$$

Recall that in the constructions of the $p$-Brauer characters one fixes an isomorphism $\xi\mapsto \overline{\xi}$ between $\mu_m$, with $m$ the maximum divisor of the exponent of $G$ which is coprime with $p$, and the group of $m$-th roots of unity in a big enough field of characteristic $p$.

Let $u$ be an element of order $n$ in $\V(\Z G)$ and set $X(u)=(\pa{u^d}{C})_{d\mid n,C\in \Cl(G)}$.
Clearly $X(u)\in \Z_n(G)$ since $u$ and any of its powers is a unit of augmentation $1$.
We denote
  $$\mu(\chi,u,\xi) = \mu(\chi,X(u),\xi).$$
By results of Luthar and Passi \cite{LutharPassi1989} and Hertweck \cite{HertweckBrauer}, if $\chi\in \Irr(G)$ (respectively, $\chi\in \IBr_p(G)$) then $\mu(\chi,u,\xi)$ is precisely the multiplicity of $\xi$ (respectively, of $\overline{\xi}$) as an eigenvalue of $\rho(u)$, where $\rho$ is an ordinary representation (respectively, a $p$-modular representation) of $G$ affording the character (respectively, the Brauer character) $\chi$. Thus $X(u)\in \VPA_n(G)$.
Moreover, if $C$ is a conjugacy class of $G$ and $g\in C$  we denote
\begin{equation}\label{MultiplicityInGroup}
\mu(\chi,C,\xi)=\mu(\chi, g, \xi) = \frac{1}{n}\sum_{d | n}  \text{Tr}_{\mathbb{Q}(\zeta^d)/\mathbb{Q}}\left(\chi(g^d)\xi^{-d}\right).
\end{equation}

For a general $X\in \VPA_n(G)$, one envisions its entry $X_{d,C}$ as the partial augmentation of $u^d$ at $C$ for a ``potential'' element $u$ of order $n$ in $\V(\Z G)$.
This is enforced by considering $\VPA$ as an acronym for ``virtual partial augmentations''.
By the Marciniak-Ritter-Sehgal-Weiss Criterion, if the entries of each element of order $n$ in $\VPA_n(G)$ are all non-negative, then every unit of $\V(\Z G)$ is rationally conjugate to an element of $G$.
In that case we say that \emph{the HeLP Method proves the Zassenhaus Conjecture} for units of order $n$ in $\V(\Z G)$.

In fact, if one realizes the above procedure only with the characters $\Irr(G)$, one obtains exactly the partial augmentations of torsion units of augmentation 1 in the complex group algebra $\C G$ which have integral partial augmentations.

Actually, the HeLP Method is not applied exactly as explained above.
In practice one uses all the available information for the units of order $n$ in $\V(\Z G)$.
For example, by the Berman-Higman Theorem, and the theorem of Hertweck mentioned above one usually reduces $\VPA_n(G)$ by imposing the following for every $X\in \VPA_n(G)$:
    $$X_{d,C}= 0, \text{ if } C=g^G \text{ and } \begin{cases}
    \text{either } d\ne n \text{ and } 1\in C, \text{ or } \\
    \frac{n}{d} \text{ is not a multiple of } |g|.
    \end{cases}$$
Furthermore, $\VPA_n(G)$ might be further refined with all the available information in the particular case considered.
For example, by the Cohn-Livingstone Theorem to prove the Zassenhaus Conjecture using HeLP it is enough to consider the divisors $n$ of the exponent of $G$. If additionally, we know that the Spectrum Problem has a positive solution for $G$ then it is enough to consider only the elements whose order is in the spectrum of $G$.
Other refinements might be obtained from the congruences given in \eqref{CohnLivingstoneCongruence} and \eqref{FolkoreCongruence}.

So there is a plain HeLP Method, as explained above, and several adapted HeLP Methods, depending on the information available.

The (adapted) HeLP Method can be used not only to prove the Zassenhaus Conjecture but also to treat other questions about torsion elements of $\V(\Z G)$. For example, one can use it to deal with the Spectrum Problem by proving that if $\VPA_n(G)\ne \emptyset$ then $G$ has an element of order $n$. Similarly, using Lemma~\ref{PAPSimplified}, one can check the PAP Property for $G$ by checking that for every divisor $n$ of the exponent of $G$ each element $(X_{d,C})$ of $\VPA_n(G)$ satisfies the following equality:
    \begin{equation}\label{PAPVirtual}
    X_{d,C} = \sum_{\substack{D\in \Cl(G), \\ D^d=C}} X_{1,D}.
    \end{equation}
In this case we say that the HeLP Method proves the PAP Property.

There are well known examples in the literature where the HeLP Method does not provide positive answers to the Prime Graph Question, and hence for these examples the HeLP Method does not prove the PAP Property (cf. e.g. \cite{4primaryII} or \cite{BovdiKonovalovConway}).
We now present some examples from the literature for which the HeLP Method gives positive answers to the General Bovdi Problem but does not prove the PAP Property.

\begin{example}{\rm
Let $p$ and $t$ be different prime integers with $p\ne 2$ and $t\ge 5$ and let $f$ be a positive integer with $p^f \equiv \pm 1 \mod 4t$. Let $G=\PSL(2,p^f)$. Then $G$ has a conjugacy class $C$ of order $2t$ and, by the main result of \cite{delRioSerrano}, there is $X\in \VPA_{2t}(G)$ such that the only non-zero entries of $X$ are the following:
    $$X_{2t,1}=X_{t,C^t}=X_{2,C^2}=X_{1,C^{\frac{t-1}{2}}}=X_{1,C^{\frac{t+1}{2}}}=1, \quad X_{1,C^{t-1}}=-1.$$
Let $D\in \Cl(G)$, consisting of elements of order $2t$. Then $D^2=C^2$ if and only if $D=C$ or $D=C^{t-1}$.
Therefore
\[\sum_{\substack{D\in \Cl(G), \\ D^2=C^2}} X_{1,D} = X_{1,C}+X_{1,C^{t-1}}=-1\ne 1=X_{2,C^2}.\]
So, the HeLP Method does not prove the PAP Property for units of order $2t$ in $\V(\Z G)$.

However, as every element in $\VPA_{2t}(G)$, which is not of the form $X(g)$ for some $g\in G$, is of the form given above \cite{delRioSerrano}, we deduce that the HeLP Method gives a positive answer to the General Bovdi Problem in $G$ for units of order $2t$.
} \qed \end{example}

\begin{example}{\rm The torsion units in integral group rings of groups of order at most 287 have been studied in \cite{SmallGroups} and for each of these groups $G$ and all the possible values of $n$, the elements of $\VPA_n(G)$, which are not of the form $X(g)$ for $g \in G$, have been listed there, using the HeLP Method in several adapted versions. It turns out that the adapted HeLP Method proves the PAP Property for all these groups except for the group $G$ whose catalogue number in the Small Group Library of \texttt{GAP} \cite{GAP} is (216, 153).
We use the \texttt{GAP} notation for conjugacy classes to elaborate.
When applying the HeLP Method here one finds $X\in \VPA_6(G)$ with $X_{1,\texttt{3a}}=X_{1,\texttt{6a}}=1$, $X_{1,\texttt{3d}}=-1$, $X_{1,C}=0$ for each $C\in \Cl(G)\setminus \{\texttt{3a},\texttt{3d},\texttt{6a}\}$ and $X_{2,C}\in \{0,1\}$ for all $C\in \Cl(G)$.
Moreover, $(\texttt{3a})^2=(\texttt{6a})^2=\texttt{3c}$ and $(\texttt{3d})^2=\texttt{3e}$.
Thus
\[\sum_{\substack{C \in \Cl(G), \\ C^2 = \texttt{3c}}} \pa{u}{C} = \pa{u}{\texttt{3a}} + \pa{u}{\texttt{6a}} = 2 \ne X_{2,\texttt{3c}}\]
and
\[\sum_{\substack{C \in \Cl(G), \\ C^2 = \texttt{3e}}} \pa{u}{C} = \pa{u}{\texttt{3d}} = -1 \ne X_{2,\texttt{3e}}.\]
Therefore, the HeLP Method does not prove the PAP Property for this group.

Note that also this example satisfies the statement of the General Bovdi Problem.
\qed }\end{example}

Now we present a version of the HeLP Method adapted to the situation where the unit satisfies the PAP Property, as for example under the hypothesis of Theorem~\ref{PAPower}.
Recall that we view an element $X=(X_{d,C})$ of $\VPA_n(G)$ as the list of partial augmentations of the $u^d$ with $d\mid n$, for a hypothetical element $u$ of order $n$ in $\V(\Z G)$.
In case $G$ satisfies the PAP Property, or at least the elements of order $n$ in $\V(\Z G)$ do, we may assume that $X$ satisfies \eqref{PAPVirtual} and hence
$X$ is completely determined by $(X_{1,C})_{C\in \Cl(G)}$.
So, for the PAP-adapted HeLP Method we replace $\Z_n(G)$ with the set $\Z_n^{\PAP}(G)$ formed by the lists $(Y_C)_{C\in \Cl(G)}$ of integers
satisfying:
\begin{itemize}
 \item $\sum_{C\in \Cl(G)} Y_C = 1$, and
 \item $Y_C=0$, if either $1\in C$ or $n$ is not multiple of $|g|$ for $g\in C$.
\end{itemize}
Given $Y=(Y_C)\in \Z_n^{\PAP}(G)$, there is a unique $X(Y)=(X_{d,C})\in \Z_n(G)$ such that $Y=(X_{1,X})_{C\in \Cl(G)}$ and  condition \eqref{PAPVirtual} holds for every $d\mid n$, namely:
	$$X_{d,C} = \sum_{\substack{D\in \Cl(G), \\ D^d=C}} Y_D.$$
In that case we have,
	\begin{equation}\label{muPAP}	
	 \mu(\chi, X(Y), \xi) = \sum_{C \in \Cl(G)} Y_C\; \mu(\chi, C, \xi).
	\end{equation}
Indeed,
	\begin{align*}
	\mu(\chi, X(Y), \xi) &= \frac{1}{n}\sum_{d | n} \sum_{C\in \Cl(G)}  \text{Tr}_{\mathbb{Q}(\zeta^d)/\mathbb{Q}}\left(\chi(C)\xi^{-d}\right) X_{d,C}\\
	&= \frac{1}{n}\sum_{d | n} \sum_{C \in \Cl(G)}  \text{Tr}_{\mathbb{Q}(\zeta^d)/\mathbb{Q}}\left(\chi(C)\xi^{-d}\right)
	\sum_{\substack{D \in \Cl(G), \\ D^d = C}} Y_D \\
	&= \frac{1}{n}\sum_{d | n} \sum_{C \in \Cl(G)} \sum_{\substack{D \in \Cl(G), \\ D^d = C}}
\text{Tr}_{\mathbb{Q}(\zeta^d)/\mathbb{Q}}\left(\chi(D^d)\xi^{-d}\right)
	 Y_C \\
	&= \sum_{C \in \Cl(G)} Y_C \frac{1}{n}
	\sum_{d | n}  \text{Tr}_{\mathbb{Q}(\zeta^d)/\mathbb{Q}}\left(\chi(C^d)\xi^{-d}\right) \\
	&= \sum_{C \in \Cl(G)} Y_C \; \mu(\chi, C, \xi).
	\end{align*}

This proves the following:

\begin{proposition}\label{MultiplicitiesLinear}
If $u$ is an element of order $n$ in $\V(\Z G)$ satisfying the PAP Property, $\chi\in\IRR_n(G)$ and $\xi\in \mu_n$ then
	\[\mu(\chi, u, \xi) = \sum_{C \in \Cl(G)} \pa{u}{C}\mu(\chi, C, \xi).\]
\end{proposition}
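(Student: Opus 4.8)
The plan is to obtain the statement as the specialization of the already-established identity \eqref{muPAP} to the array of partial augmentations of $u$ itself. I would set $Y=(Y_C)_{C\in\Cl(G)}$ with $Y_C=\pa{u}{C}$, and let $X(Y)=(X_{d,C})$ be the associated element of $\Z_n(G)$, so that $X_{1,C}=Y_C$ and $X_{d,C}=\sum_{D^d=C}Y_D$ for every $d\mid n$. The whole argument then reduces to identifying $X(u)$ with $X(Y)$ and quoting \eqref{muPAP}.

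First I would verify that $Y\in\Z_n^{\PAP}(G)$. The condition $\sum_{C\in\Cl(G)}Y_C=1$ holds because $u$ has augmentation $1$; the requirement $Y_C=0$ whenever $1\in C$ is the Berman--Higman Theorem (the case $n=1$, where $u=1$, being trivial); and $Y_C=0$ whenever $|g|\nmid n$ for $g\in C$ is Hertweck's theorem \cite[Theorem 2.3]{HertweckBrauer}. This is routine bookkeeping and poses no difficulty.

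The decisive step is to invoke the hypothesis that $u$ satisfies the PAP Property. For each divisor $d$ of $n$, the condition $\PAP(d)$ asserts $\pa{u^d}{C}=\sum_{D^d=C}\pa{u}{D}$ for every $C\in\Cl(G)$, which is exactly the defining relation \eqref{PAPVirtual} for $X(Y)$; by the uniqueness in that construction we conclude $X(u)=X(Y)$. Combining the definition $\mu(\chi,u,\xi)=\mu(\chi,X(u),\xi)$ with \eqref{muPAP} then yields
\[
\mu(\chi,u,\xi)=\mu(\chi,X(Y),\xi)=\sum_{C\in\Cl(G)}Y_C\,\mu(\chi,C,\xi)=\sum_{C\in\Cl(G)}\pa{u}{C}\,\mu(\chi,C,\xi),
\]
which is the asserted formula. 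I do not expect any real obstacle here: all of the analytic work, namely the reindexing of the Galois traces and the collapse of the double sum over $d\mid n$ and $\Cl(G)$ into a single sum over conjugacy classes, was already carried out in deriving \eqref{muPAP}. The only point deserving attention is the identification $X(u)=X(Y)$, which is precisely where, and only where, the PAP hypothesis is used.
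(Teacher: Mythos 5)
Your proposal is correct and follows exactly the paper's route: the paper derives the identity \eqref{muPAP} by the displayed trace computation and then states the proposition with ``This proves the following,'' leaving implicit precisely the bookkeeping you spell out, namely that $Y_C=\pa{u}{C}$ lies in $\Z_n^{\PAP}(G)$ and that $\PAP(d)$ for all $d\mid n$ forces $X(u)=X(Y)$ by the uniqueness of the construction. Making that identification explicit is a faithful (indeed slightly more careful) rendering of the paper's own argument, so there is nothing to correct.
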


This justifies the correctness of the \emph{PAP-adapted HeLP Method} which consists in calculating
the following set
    \begin{equation}\label{VPAPAP}
    \VPA_n^{\PAP}(G)=\left\{Y\in \Z_n^{\PAP}(G): \matriz{{l} \sum_{C \in \Cl(G)} Y_C\; \mu(\chi, C, \xi) \ge 0, \\\text{ for all } \chi \in \IRR_n(G), \xi \in \mu_n} \right\}
    \end{equation}
for every divisor $n$ of the exponent of $G$.
This will provide the partial augmentations of all the ``potential'' elements of order $n$ in $\V(\Z G)$ satisfying the PAP Property.
In case all the elements of $\VPA_n^{\PAP}(G)$ have all non-negative entries then all the elements of order $n$ in $\V(\Z G)$ satisfying the PAP Property are
rationally conjugate to elements of $G$.

The PAP-adapted HeLP Method has the following advantages, with respect to the standard HeLP Method:
The number of unknowns is smaller than for the standard HeLP Method, unless $n$ is prime.
The condition $\mu(\chi,X,\xi)\in \Z^{\ge 0}$ from the standard HeLP Method is replaced by $\sum_{C \in \Cl(G)} Y_C\; \mu(\chi, C, \xi) \ge 0$ which seems to
be more suitable for implementation.

We illustrate the PAP-adapted HeLP Method in the following proof of Theorem~\ref{TheoremZC}.
\medskip

\begin{proofof}\textit{Theorem~\ref{TheoremZC}}.
First of all $G$ satisfies the PAP Property by Corollary~\ref{PAPNilpotentIndexp}.
Let $[G:A] = p$ and let $x \in G$ be an element of $p$-power order such that $G = \langle A, x \rangle$. For a prime $q$ denote by $A_q$ a Sylow $q$-subgroup of $A$. We will also denote the $q$-part of an element $g \in G$ by $g_q$. 

Let $u$ be a torsion element of $\V(\Z G)$. 
If $u\not\in \V(\Z G,A)$ then $u$ is rationally conjugate to an element in $G$ by \cite[Lemma~5.9]{MargolisdelRioCW1}. 
So assume that $u\in \V(\Z G,A)$.
Let $a\in G$ with $\pa{u}{a^G}\ne 0$.
By results of Hertweck \cite[Theorem 2.3]{MargolisdelRioCW1} we have $a \in A$ and whenever $g$ is an element of $G$ 
satisfying $\pa{u}{g^G} \ne 0$ the $q$-parts $a_q$ and $g_q$ are conjugate for any prime $q$.

For every prime divisor $q$ of $|A|$ let $b_q, c_q \in A_q$ such that $A_q = \GEN{b_q}\times \GEN{c_q}$ and $a_q \in \langle b_q \rangle$.
Let $K=\prod_q K_q$ with $q$ running on the prime divisors of $|A|$ and
    $$K_q =\begin{cases}
    \GEN{c_q}, & \text{if } x \in N_G(\langle a_q \rangle); \\ \GEN{b_q}, & \text{otherwise}.
    \end{cases}$$
As $A/K$ is cyclic, there is a linear representation $\rho$ of $A$ with kernel $K$. Let $\xi=\rho(a)$.

Note that if $x \notin N_G(\langle a_q \rangle)$ then $\langle a_q \rangle \neq \langle a_q \rangle^{x^i}$ for any $i = 1,...,p-1$.
We claim that if $d$ is an element in $A$ satisfying that $a_q$ and $d_q$ are conjugate in $G$ for every prime $q$ then $\rho(d) = \xi$ implies $d = a$. Indeed, if $\rho(d)= \xi$ then $ad^{-1}\in K$.
Therefore, if $x \notin N_G(\langle a_q \rangle)$ then $a_qd_q^{-1} \in \langle b_q \rangle$. Since $a_q \in \langle b_q 
\rangle$ and the order of $a_q$ and $d_q$ coincide it follows that $\langle a_q \rangle = \langle d_q \rangle$. Now, if 
$d_q \ne a_q$ then the fact that $d_q$ and $a_q$ are conjugate in $G$ and the condition $x \notin N_G(\langle a_q 
\rangle)$ imply that $\langle a_q \rangle \ne \langle d_q \rangle$, giving a contradiction. On the other hand, if $x \in 
N_G(\langle a_q \rangle)$ then $a_qd_q^{-1} \in \langle c_q \rangle \cap \langle a_q \rangle = 1$. Thus $a_q = d_q$ for 
every $q$ and so $a = d$, as desired. 

Therefore, if $g$ is an element of $G$ such that $\pa{u}{g^G}\ne 0$ and $\xi$ is an eigenvalue of $\ind_A^G \rho(g)$ then $g\in a^G$.
So, denoting by $\chi$ the character afforded by $\ind_A^G \rho$, by Proposition~\ref{MultiplicitiesLinear}, we have $\mu(\chi, u, \xi) = \pa{u}{a^G}\mu(\chi, a, \xi) \geq 0$ and hence $\pa{u}{a^G}\ge 0$. As $a$ is an arbitrary element of $G$ with $\pa{u}{a^G}\ne 0$ we conclude that $u$ has non-negative partial augmentations and hence it is rationally conjugate to an element of $G$, by Proposition~\ref{ZassenhausVersusPAP}.
\end{proofof}

\begin{remark}
Proposition~\ref{MultiplicitiesLinear} is the theoretical support for the inequalities in \eqref{VPAPAP} and hence for 
the adapted HeLP Method. 
In fact for ordinary characters these inequalities are those in \cite[Proposition~3.6]{MargolisdelRioCW1}.
We will explain this using the notation from these two results, in particular $u$ is an element of order $n$ in 
$\V(\Z G)$ while $\xi$ is an $n$-th root of unity and $\chi$ an ordinary character of $G$. Define a linear character 
$\eta$ of $\langle u \rangle$ via $\eta(u) = \xi^{-1}$ and a character $\psi$ of $\langle u \rangle \times G$ as $\eta 
\otimes \chi$. Let $m$ be an element in $G$. Then \[\mu(\chi, m, \xi) = \frac{1}{|u|}\sum_{i=0}^{|u|-1} \psi(u^i,m^i) = 
a(m, \psi),  \]
where the latter is notation from \cite[Section~3]{MargolisdelRioCW1}.

On the other hand the elementary tensors of $\Irr(\langle u \rangle) \otimes \Irr(G)$ form the ordinary irreducible 
characters of $\langle u \rangle \times G$, i.e. each $a(m, \psi)$ may be viewed as the sum of multiplicities of certain 
eigenvalues of representations of $G$ evaluated at $m$. 
So the inequalities in \cite[Proposition~3.6]{MargolisdelRioCW1} can be translated to the inequalities in 
\eqref{VPAPAP}.
This shows that the HeLP Method and the inequalities of \cite[Proposition~3.6]{MargolisdelRioCW1} provide the same 
information about Sehgal's Problem.
Moreover, by \cite[Remark~3.10]{MargolisdelRioCW1} the inequalities in \cite[Theorem~3.8]{MargolisdelRioCW1} provide at 
least this information.

However \cite[Proposition~3.6]{MargolisdelRioCW1} only states the result for units in $\V(\Z G,N)$ for some nilpotent 
normal subgroup $N$, while the inequalities in \eqref{VPAPAP} are valid for any unit satisfying the PAP Property. Also 
they can be applied with Brauer characters which is not the case for \cite[Proposition~3.6]{MargolisdelRioCW1}.
\end{remark}

\bibliographystyle{amsalpha}
\bibliography{CW}

\newcommand{\etalchar}[1]{$^{#1}$}
\providecommand{\bysame}{\leavevmode\hbox to3em{\hrulefill}\thinspace}
\providecommand{\MR}{\relax\ifhmode\unskip\space\fi MR }
\providecommand{\MRhref}[2]{%
  \href{http://www.ams.org/mathscinet-getitem?mr=#1}{#2}
}
\providecommand{\href}[2]{#2}
\begin{thebibliography}{MRSW87}

\bibitem[AB89]{ArtamonovBovdi1989}
V.~A. Artamonov and A.~A. Bovdi, \emph{Integral group rings: groups of
  invertible elements and classical {$K$}-theory}, Algebra. {T}opology.
  {G}eometry, {V}ol.\ 27 ({R}ussian), Itogi Nauki i Tekhniki, Akad. Nauk SSSR,
  Vsesoyuz. Inst. Nauchn. i Tekhn. Inform., Moscow, 1989, Translated in J.
  Soviet Math. {{\bf{5}}7} (1991), no. 2, 2931--2958, pp.~3--43, 232.

\bibitem[Ari07]{Ari}
\emph{Mini-{W}orkshop: {A}rithmetik von {G}ruppenringen}, Oberwolfach Rep.
  \textbf{4} (2007), no.~4, 3209--3239, Abstracts from the mini-workshop held
  November 25--December 1, 2007, Organized by Eric Jespers, Zbigniew Marciniak,
  Gabriele Nebe and Wolfgang Kimmerle, Oberwolfach Reports. Vol. 4, no. 4.

\bibitem[BH08]{BovdiHertweck2008}
V.~A. Bovdi and M.~Hertweck, \emph{Zassenhaus conjecture for central extensions
  of {$S_5$}}, J. Group Theory \textbf{11} (2008), no.~1, 63--74.

\bibitem[BHK{\etalchar{+}}17]{SmallGroups}
A.~B\"achle, A.~Herman, A.~Konovalov, L.~Margolis, and G.~Singh, \emph{The
  status of the {Z}assenhaus conjecture for small groups}, Experimental
  Mathematics (2017), 6 pages, doi:10.1080/10586458.2017.1306814.

\bibitem[BKL11]{BovdiKonovalovConway}
V.~A. Bovdi, A.~B. Konovalov, and S.~Linton, \emph{Torsion units in integral
  group rings of {C}onway simple groups}, Internat. J. Algebra Comput.
  \textbf{21} (2011), no.~4, 615--634.

\bibitem[BKM18]{BaechleKimmerleMargolisDFG}
A.~B\"achle, W.~Kimmerle, and L.~Margolis, \emph{Algorithmic aspects of units
  in group rings}, Algorithmic and Experimental Methods in Algebra, Geometry,
  and Number Theory (G.~B\"ockler, W.~Decker, and G.~Malle, eds.), Springer
  Verlag, 2018, pp.~1--22.

\bibitem[BM17a]{HeLPPaper}
A.~B\"achle and L.~Margolis, \emph{{HeLP} -- {A} \textsf{GAP}-package for
  torsion units in integral group rings}, preprint, arxiv.org/abs/1507.08174v3
  (2015-2017), 6 pages.

\bibitem[BM17b]{4primaryI}
Andreas B\"achle and Leo Margolis, \emph{On the prime graph question for
  integral group rings of 4-primary groups {I}}, Internat. J. Algebra Comput.
  \textbf{27} (2017), no.~6, 731--767.

\bibitem[BM18]{4primaryII}
A.~B\"achle and L.~Margolis, \emph{On the {P}rime {G}raph {Q}uestion for
  {I}ntegral {G}roup {R}ings of 4-primary groups {II}}, Algebras and
  Representation Theory (2018), 17 pages, doi.org/10.1007/s10468-018-9776-6.

\bibitem[Bov87]{BovdiBook}
A.A. Bovdi, \emph{The unit group of an integral group ring ({R}ussian)},
  Uzhgorod Univ., Uzhgorod, 1987.

\bibitem[CL65]{CohnLivingstone}
J.~A. Cohn and D.~Livingstone, \emph{On the structure of group algebras. {I}},
  Canad. J. Math. \textbf{17} (1965), 583--593.

\bibitem[CMdR13]{CaicedoMargolisdelRio2013}
M.~Caicedo, L.~Margolis, and \'A. del R\'{\i}o, \emph{Zassenhaus conjecture for
  cyclic-by-abelian groups}, J. Lond. Math. Soc. (2) \textbf{88} (2013), no.~1,
  65--78.

\bibitem[Dok92]{Dokuchaev1992}
M.~A. Dokuchaev, \emph{Torsion units in integral group rings of nilpotent
  metabelian groups}, Comm. Algebra \textbf{20} (1992), no.~2, 423--435.

\bibitem[DS94]{DokuchaevSehgal1994}
M.~A. Dokuchaev and S.~K. Sehgal, \emph{Torsion units in integral group rings
  of solvable groups}, Comm. Algebra \textbf{22} (1994), no.~12, 5005--5020.

\bibitem[EM17]{EiseleMargolis}
F.~Eisele and L.~Margolis, \emph{A counterexample to the {F}irst {Z}assenhaus
  {C}onjecture}, preprint, arxiv.org/abs/arXiv:1710.08780 (2017), 32 pages.

\bibitem[GAP16]{GAP}
The GAP~Group, \emph{{GAP -- Groups, Algorithms, and Programming, Version
  4.8.1}}, 2016, http://www.gap-system.org.

\bibitem[Her06]{Hertweck2006}
M.~Hertweck, \emph{On the torsion units of some integral group rings}, Algebra
  Colloq. \textbf{13} (2006), no.~2, 329--348.

\bibitem[Her07]{HertweckBrauer}
\bysame, \emph{Partial augmentations and {B}rauer character values of torsion
  units in group rings}, arXiv.org/abs/0612429v2, 2004 - 2007.

\bibitem[Her08a]{HertweckOrders}
\bysame, \emph{The orders of torsion units in integral group rings of finite
  solvable groups}, Comm. Algebra \textbf{36} (2008), no.~10, 3585--3588.

\bibitem[Her08b]{Hertweck2008}
\bysame, \emph{Torsion units in integral group rings of certain metabelian
  groups}, Proc. Edinb. Math. Soc. (2) \textbf{51} (2008), no.~2, 363--385.

\bibitem[Her08c]{HertweckA6}
\bysame, \emph{Zassenhaus conjecture for {$A_6$}}, Proc. Indian Acad. Sci.
  Math. Sci. \textbf{118} (2008), no.~2, 189--195.

\bibitem[Hig40]{Higman1940}
G.~Higman, \emph{The units of group-rings}, Proc. London Math. Soc. (2)
  \textbf{46} (1940), 231--248.

\bibitem[JdR16]{GRG1}
E.~Jespers and {\'A}.~del R{\'{\i}}o, \emph{{Group ring groups. Volume 1:
  Orders and generic constructions of units}}, Berlin: De Gruyter, 2016.

\bibitem[Jur95]{Juriaans1995}
S.~O. Juriaans, \emph{Torsion units in integral group rings}, Canad. Math.
  Bull. \textbf{38} (1995), no.~3, 317--324.

\bibitem[Kim06]{Kimmerle2006}
W.~Kimmerle, \emph{On the prime graph of the unit group of integral group rings
  of finite groups}, Groups, rings and algebras, Contemp. Math., vol. 420,
  Amer. Math. Soc., Providence, RI, 2006, pp.~215--228.

\bibitem[KK17]{KimmerleKonovalov2016}
W.~Kimmerle and A.B. Konovalov, \emph{On the {G}ruenberg-{K}egel graph of
  integral group rings of finite groups}, Internat. J. Algebra Comput.
  \textbf{27} (2017), no.~6, 619--631.

\bibitem[KM17]{KimmerleMargolis17}
Wolfgang Kimmerle and Leo Margolis, \emph{{$p$}-subgroups of units in {$\Bbb
  ZG$}}, Groups, rings, group rings, and {H}opf algebras, Contemp. Math., vol.
  688, Amer. Math. Soc., Providence, RI, 2017, pp.~169--179.

\bibitem[LP89]{LutharPassi1989}
I.~S. Luthar and I.~B.~S. Passi, \emph{Zassenhaus conjecture for {$A_5$}},
  Proc. Indian Acad. Sci. Math. Sci. \textbf{99} (1989), no.~1, 1--5.

\bibitem[MdR18a]{MargolisdelRioCWAlgorithm}
L.~Margolis and {\'A}.~del R{\'i}o, \emph{An algorithm to construct candidates
  to counterexamples to the {Z}assenhaus {C}onjecture}, Journal of Algebra.
  Computational Section, to appear, arxiv.org/abs/1710.05629 (2018), 22 pages.

\bibitem[MdR18b]{MargolisdelRioCW1}
\bysame, \emph{Cliff-{W}eiss inequalities and the {Z}assenhaus {C}onjecture},
  J. Algebra \textbf{507} (2018), 292--319.

\bibitem[MdRS16]{FermatMersenne}
L.~Margolis, {\'A}.~del R{\'i}o, and M.~Serrano, \emph{Zassenhaus conjecture on
  torsion units holds for $\operatorname{{P}{S}{L}}(2,p)$ with $p$ a {F}ermat
  or {M}ersenne prime}, preprint, arxiv.org/abs/arXiv:1608.05797 (2016), 13
  pages.

\bibitem[MRSW87]{MarciniakRitterSehgalWeiss1987}
Z.~Marciniak, J.~Ritter, S.~K. Sehgal, and A.~Weiss, \emph{Torsion units in
  integral group rings of some metabelian groups. {II}}, J. Number Theory
  \textbf{25} (1987), no.~3, 340--352.

\bibitem[MS05]{MarciniakSehgal2005}
Z.~S. Marciniak and S.~K. Sehgal, \emph{Generic units in abelian group rings},
  J. Group Theory \textbf{8} (2005), no.~6, 777--799.

\bibitem[RS17]{delRioSerrano}
\'A.~del R\'{\i}o and M.~Serrano, \emph{On the torsion units of the integral
  group ring of finite projective special linear groups}, Comm. Algebra (2017),
  15 pages, doi:10.1080/00927872.2017.1291814.

\bibitem[Seh93]{Sehgal1993}
S.~K. Sehgal, \emph{Units in integral group rings}, Pitman Monographs and
  Surveys in Pure and Applied Mathematics, vol.~69, Longman Scientific \&
  Technical, Harlow, 1993.

\bibitem[Wei91]{Weiss1991}
A.~Weiss, \emph{Torsion units in integral group rings}, J. Reine Angew. Math.
  \textbf{415} (1991), 175--187.

\bibitem[Zas74]{Zassenhaus}
H.J. Zassenhaus, \emph{On the torsion units of finite group rings}, Studies in
  mathematics (in honor of {A}. {A}lmeida {C}osta), Instituto de Alta Cultura,
  Lisbon, 1974, pp.~119--126.

\end{thebibliography}

\noindent
Departamento de Matemáticas, Universidad de Murcia, 30100 Murcia, Spain\newline
email: leo.margolis@um.es, adelrio@um.es

\end{document}